\setlist[enumerate]{nosep,label=\textnormal{(\arabic*)}}
\theoremstyle{plain}
\newtheorem{thm}{Theorem}[section] \newtheorem*{thm*}{Theorem}
\newtheorem{cor}[thm]{Corollary} \newtheorem*{cor*}{Corollary}
 \newtheorem*{prop*}{Proposition}
\newtheorem{lem}[thm]{Lemma} \newtheorem*{lem*}{Lemma}
\newtheorem{q}[thm]{Question} \newtheorem*{q*}{Question}
 \newtheorem*{conj*}{Conjecture}
\newtheorem{claim}[thm]{Claim}
\theoremstyle{definition}
\newtheorem{defn}[thm]{Definition} \newtheorem*{defn*}{Definition}
\theoremstyle{remark}
\newtheorem{rem}[thm]{Remark} \newtheorem*{rem*}{Remark}
 \newtheorem*{ex*}{Example}
\theoremstyle{plain}
\Crefname{defn}{Definition}{Definitions}
\newcommand{\FP}{\mathrm{FP}}
\renewcommand{\b}[1]{b^{(2)}_{#1}}
\newcommand{\Dk}[1]{\mathcal{D}_{k[{#1}]}}
\newcommand{\N}{\mathbb{N}}
\newcommand{\Q}{\mathbb{Q}}
\newcommand{\Z}{\mathbb{Z}}
\newcommand{\inv}{^{-1}}
\DeclareMathOperator{\cd}{cd}
\DeclareMathOperator{\h}{H}
\DeclareMathOperator{\Hom}{Hom}
\DeclareMathOperator{\supp}{supp}
\newcounter{pablocomments}
\newcounter{samcomments}
\title{On the top-dimensional $L^2$-Betti number of residually poly-$\Z$ groups}
\author{Sam P.\  Fisher}
\author{Pablo S\'anchez-Peralta}
\address{Instituto de Ciencias Matem\'aticas, CSIC-UAM-UC3M-UCM}
\email{samuel.p.fisher@gmail.com}
\address{Departamento de Matem\'aticas, Universidad Aut\'onoma de Madrid}
\email{pablo.sanchezperalta@uam.es}
\begin{document}

\begin{abstract}
    Let $G$ be a residually poly-$\Z$ group of finite type. We prove that $G$ admits a poly-$\Z$ quotient with kernel $N$ satisfying $\cd_\Q(N) < \cd_\Q(G)$ if and only if the top-dimensional $L^2$-Betti number of $G$ vanishes.
\end{abstract}

\maketitle

\section{Introduction}

The existence of an infinite-index normal subgroup satisfying relatively mild finiteness conditions in a group $G$ has strong implications on the vanishing of the $L^2$-Betti numbers $\b{i}(G)$. This is perhaps best exemplified by Gaboriau's theorem \cite[Th\'eor\`eme 6.8]{Gaboriau2002}, which states that if $G$ has a finitely generated infinite normal subgroup of infinite index, then $\b{1}(G) = 0$. If one assumes there is a normal subgroup $N$ such that $G/N$ is infinite and amenable, then for any $n$ we have that $\b{n}(N) < \infty$ implies $\b{n}(G) = 0$ \cite[Th\'eor\`eme 6.6]{Gaboriau2002}. This has the following consequences: if $N \trianglelefteqslant G$ is a normal subgroup such that $G/N$ is infinite amenable, then

\begin{enumerate}
    \item $\b{i}(G) = 0$ for all $i \leqslant n$ if $N$ is of type $\FP_n(\Q)$;
    \item $\b{i}(G) = 0$ for all $i > \cd_\Q(N)$.
\end{enumerate}

On the other hand, it is difficult to deduce the existence of a normal subgroup with desirable finiteness properties from the knowledge that $\b{n}(G) = 0$ for some $n$. One of the most general classes of groups where this type of result has been obtained is in the class of RFRS groups, which can be defined as the class of residually \{poly-$\Z$ virtually Abelian\} groups (see \cite[Theorem 6.3]{OkunSchreve_DawidSimplified}). Kielak showed that if $G$ is a non-trivial finitely generated RFRS group, then $\b{1}(G) = 0$ if and only if $G$ has a finite-index subgroup mapping onto $\Z$ with finitely generated kernel \cite{KielakRFRS}. In a different direction, the first author proved that if a non-trivial finitely generated RFRS group $G$ with $\cd(G) \leqslant 2$ has $\b{2}(G) = 0$, then $G$ is virtually free-by-cyclic \cite{Fisher_freebyZ}.

An attempt to generalise these results beyond the class of RFRS groups was made in \cite{FisherKlinge_RPVN}, where the class of RPVN (for residually \{poly-$\Z$ virtually nilpotent\}) groups was studied. There, the initial goal was to prove that Kielak's fibring theorem holds for RPVN groups. While this was not achieved, an analogue of \cite[Theorem A]{Fisher_freebyZ} was obtained: if $G$ is a finitely generated RPVN group of cohomological dimension at most two, then $G$ is free-by-\{poly-$\Z$ virtually nilpotent\} if and only if $\b{2}(G) = 0$ \cite[Corollary C]{FisherKlinge_RPVN}. The purpose of this note is to show that the virtual nilpotence hypothesis is unnecessary, as well as to give a new and simpler proof.

\begin{thm}\label{thm:intro_main}
    Let $G$ be a residually poly-$\Z$ group of type $\FP(\Q)$ and of finite rational cohomological dimension $\cd_\Q(G) = n$. The following are equivalent:
    \begin{enumerate}
        \item $\b{n}(G) = 0$;
        \item there is a normal subgroup $N \trianglelefteqslant G$ such that $\cd_\Q(N) < n$ and $G/N$ is poly-$\Z$.
    \end{enumerate}
\end{thm}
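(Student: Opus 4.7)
The direction $(2) \Rightarrow (1)$ is immediate from the consequence of Gaboriau's theorem recalled in the introduction: since $\cd_\Q(N) < \cd_\Q(G) = n$, the quotient $G/N$ is infinite, and as a poly-$\Z$ group it is amenable, so $\b{i}(G) = 0$ for every $i > \cd_\Q(N)$; in particular $\b{n}(G) = 0$.

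For $(1) \Rightarrow (2)$ the plan is to work with the Hughes-free division ring $\Dk{G}$ of $\Q G$, which exists by Jaikin-Zapirain's theorem because $G$ is residually poly-$\Z$ and hence locally indicable. Fix a descending residual chain $\{N_i\}_{i \in \N}$ with $\bigcap_i N_i = 1$ and $Q_i := G/N_i$ poly-$\Z$. The $\FP(\Q)$ hypothesis and $\cd_\Q(G) = n$ provide a length-$n$ resolution $P_\bullet$ of $\Q$ by finitely generated projective $\Q G$-modules, so $\b{n}(G) = \dim_{\Dk{G}} H_n(G; \Dk{G})$; since $H_n(G; \Dk{G})$ is finitely generated over the division ring $\Dk{G}$, the assumption $\b{n}(G) = 0$ forces $H_n(G; \Dk{G}) = 0$. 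An approximation theorem---available through Jaikin-Zapirain's machinery for Hughes-free division rings along chains of amenable quotients---then yields $\dim_{\Dk{G}} H_n(G; \Dk{G}) = \lim_i \dim_{\Dk{Q_i}} H_n(G; \Dk{Q_i})$, so $H_n(G; \Dk{Q_i}) = 0$ for $i$ large.

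A Lyndon-Hochschild-Serre spectral sequence computation, using flatness of the Ore localization $\Dk{Q_i}$ over $\Q Q_i$, collapses to yield the isomorphism $H_n(G; \Dk{Q_i}) \cong H_n(N_i; \Q) \otimes_{\Q Q_i} \Dk{Q_i}$. Bieri's theorem on $\FP$-extensions (applicable because $G$ is of type $\FP_\infty(\Q)$ and $Q_i$ is of type $\FP_\infty$) ensures that $H_n(N_i; \Q)$ is finitely generated as a $\Q Q_i$-module, so the vanishing above says $H_n(N_i; \Q)$ is a $\Q Q_i$-torsion module.

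The heart of the argument---and where I expect the main obstacle---is upgrading this torsion condition to the inequality $\cd_\Q(N_i) < n$. The natural plan is to use the restricted resolution $P_\bullet|_{\Q N_i}$, which is a length-$n$ $\Q N_i$-projective resolution of $\Q$ (valid since $\Q G$ is $\Q N_i$-free) whose top boundary $\partial_n$ is injective; the inequality $\cd_\Q(N_i) \leq n-1$ is then equivalent to $\partial_n$ admitting a $\Q N_i$-linear retraction, i.e.\ to the short exact sequence $0 \to P_n \to P_{n-1} \to \coker(\partial_n) \to 0$ splitting over $\Q N_i$. The hypothesis only provides a left inverse after base change to the Ore field $\Dk{Q_i}$, and the delicate point is to lift this generalized inverse back to $\Q N_i$---possibly via the Hughes-free structure of $\Dk{G}$ as an Ore localization of the crossed product $\Dk{N_i} *_G Q_i$, or through an iterative argument exploiting that $N_i$ itself is residually poly-$\Z$.
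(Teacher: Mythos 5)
Your direction $(2)\Rightarrow(1)$ is fine, and your reductions in $(1)\Rightarrow(2)$ up to the point you flag are essentially correct (modulo citation: the approximation statement $\dim_{\Dk{G}}\h_n(G;\Dk{G})=\lim_i\dim_{\mathcal D_{\Q[G/N_i]}}\h_n(G;\mathcal D_{\Q[G/N_i]})$ is a L\"uck--Schick-type approximation theorem along chains with amenable quotients in characteristic zero, not part of the Hughes-free machinery per se; also the isomorphism $\h_n(G;\mathcal D_{\Q[Q_i]})\cong \h_n(N_i;\Q)\otimes_{\Q Q_i}\mathcal D_{\Q[Q_i]}$ is just flat base change along the Ore localisation, no spectral sequence needed). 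But the step you yourself identify as the heart of the matter is a genuine gap, and it is exactly where the content of the theorem lies. The vanishing $\h_n(G;\mathcal D_{\Q[Q_i]})=0$ only records that $\h_n(N_i;\Q)$ is $\Q Q_i$-torsion: base change along $\Q G\to \mathcal D_{\Q[Q_i]}$ factors through the $N_i$-coinvariants of your resolution, so all information about the $\Q N_i$-module structure of the syzygies is destroyed, and what remains is data about $\Q$-coefficient homology of $N_i$ only. By contrast, $\cd_\Q(N_i)<n$ is the statement that $\h^n(N_i;M)=0$ for \emph{every} $\Q N_i$-module $M$, equivalently that $\coker(\partial_n)$ is projective over $\Q N_i$. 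There is no mechanism in your sketch for lifting the $\mathcal D_{\Q[Q_i]}$-linear retraction back to $\Q N_i$, and the fallback of iterating inside $N_i$ does not get off the ground because $N_i$ carries no finiteness hypotheses (it need not be finitely generated, let alone of type $\FP(\Q)$), so neither the approximation step nor the finite resolution argument applies to it.

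The paper bridges precisely this gap by working with cohomology and with coefficient rings that remember $N_i$. From $\h^n(G;\Dk{G})=0$ and Okun--Schreve's theorem $\Dk{G}=\bigcup_i\mathcal R_i$ one gets $\h^n(G;\mathcal R_i)=0$ for some $i$, where $\mathcal R_i$ is the inductive ring built over $k[N_i]$ --- crucially a ring of coefficients containing $k[N_i]$, unlike $\mathcal D_{\Q[Q_i]}$ on which $N_i$ acts trivially. Then, for an arbitrary $k[N_i]$-module $M$, the Malcev--Neumann-type modules $M*_<G/N_i$ (for the $2^l$ induced lexicographic orders) are $\mathcal R_i$-modules which jointly surject onto $\operatorname{coInd}_{N_i}^G M$ via the orthant decomposition of $\Z^d$; since $\h^n(G;-)$ is right exact in top dimension, $\h^n(G;\mathcal R_i)=0$ forces $\h^n(N_i;M)\cong\h^n(G;\operatorname{coInd}_{N_i}^G M)=0$ for every $M$. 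Your homological route cannot access arbitrary coefficients in this way, so without a substitute for this construction the proof is incomplete. (A secondary point: the approximation input you invoke is special to characteristic zero, whereas the paper's argument runs uniformly over any field, which is what gives the mod $p$ version in Theorem 4.2; for the $\Q$-statement at hand this is only a matter of scope, not correctness.)
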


\cref{thm:intro_main} will follow from the more precise and general \cref{thm:main}, which also holds with mod $p$ analogues of $L^2$-Betti numbers. In dimension two, an application of the Stallings--Swan theorem yields the following corollary.

\begin{cor}\label{cor:free_by_poly_Z}
    Let $G$ be a finitely generated residually poly-$\Z$ group with $\cd_\Q(G)$ at most $2$. Then $G$ is free-by-\{poly-$\Z$\} if and only if $\b{2}(G) = 0$.
\end{cor}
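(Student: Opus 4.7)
The plan is to prove the two directions separately: the forward direction is a direct application of Gaboriau's vanishing as recalled in the introduction, while the converse combines \cref{thm:intro_main} with the rational Stallings--Swan theorem.

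\emph{If $G$ is free-by-\{poly-$\Z$\}, then $\b{2}(G)=0$.} Let $1\to F\to G\to Q\to 1$ be a short exact sequence with $F$ free and $Q$ poly-$\Z$. If $Q$ is trivial, then $G=F$ is free with $\cd_\Q(G)\leqslant 1$, so $\b{2}(G)=0$. Otherwise $Q$ is non-trivial; being poly-$\Z$, it is torsion-free (hence infinite) and amenable. Since $F$ is free we have $\cd_\Q(F)\leqslant 1$, so consequence~(2) of Gaboriau's theorem stated in the introduction, applied with $n=2$, gives $\b{2}(G)=0$.

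\emph{If $\b{2}(G)=0$, then $G$ is free-by-\{poly-$\Z$\}.} Apply \cref{thm:intro_main} with $n=\cd_\Q(G)$ to obtain a normal subgroup $N\trianglelefteqslant G$ with $G/N$ poly-$\Z$ and $\cd_\Q(N)<\cd_\Q(G)\leqslant 2$; in particular $\cd_\Q(N)\leqslant 1$. Because $G$ is residually poly-$\Z$ and every poly-$\Z$ group is torsion-free, $G$, and therefore $N$, is torsion-free. The rational version of the Stallings--Swan theorem (due to Dunwoody) then forces $N$ to be free, since every torsion-free group of rational cohomological dimension at most $1$ is free. Hence $G$ is free-by-\{poly-$\Z$\}.

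\emph{Main obstacle.} The substantive content is entirely in \cref{thm:intro_main}; once it is available both directions are essentially immediate. The only technical point specific to the corollary is reconciling its hypothesis of mere finite generation with the $\FP(\Q)$ hypothesis of the main theorem. I anticipate that this is handled either by invoking the sharper \cref{thm:main} advertised in the introduction, or by verifying that, in the present low-dimensional residually poly-$\Z$ setting, finite generation automatically promotes to the required finiteness condition. The Dunwoody input and the Gaboriau input are both classical and present no real difficulty.
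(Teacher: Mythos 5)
Your overall route is the same as the paper's: Gaboriau for the forward direction, the main theorem plus a cohomological-dimension-one argument for the converse. The one substantive issue is the point you yourself flag and leave open: bridging finite generation to the $\FP(\Q)$ hypothesis. Neither of your anticipated fixes works as stated. Invoking \cref{thm:main} instead of \cref{thm:intro_main} does not help, because \cref{thm:main} carries exactly the same $\FP(k)$ hypothesis; and mere finite generation does \emph{not} automatically promote to $\FP(\Q)$ for residually poly-$\Z$ groups of $\cd_\Q \leqslant 2$. The paper closes this gap using the hypothesis $\b{2}(G) = 0$ itself: by a theorem of Jaikin-Zapirain and Linton (Theorem 3.10 of their coherence paper), a finitely generated group with $\cd_\Q(G) \leqslant 2$ and vanishing second $L^2$-Betti number is of type $\FP(\Q)$. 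This is a genuine external input, not a formality, so without it (or something equivalent) the converse direction is incomplete.

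Two smaller remarks. First, your application of \cref{thm:intro_main} ``with $n = \cd_\Q(G)$'' silently assumes $\cd_\Q(G) = 2$: if $\cd_\Q(G) \leqslant 1$ the hypothesis $\b{n}(G) = 0$ is not what is given ($\b{2}$ is), so that case should be treated directly --- there $G$ itself is finitely generated, torsion-free of $\cd_\Q \leqslant 1$, hence free, and free-by-\{trivial\} suffices. Second, your appeal to Dunwoody's rational version of Stallings--Swan for the kernel is the right statement to use (the kernel has $\cd_\Q \leqslant 1$, not $\cd_\Z \leqslant 1$, and need not be finitely generated), and is if anything more carefully attributed than the paper's citation. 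The forward direction via Gaboriau, including the separate treatment of a trivial poly-$\Z$ quotient, matches the paper and is fine.
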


The results of \cite{KielakRFRS,Fisher_freebyZ,FisherKlinge_RPVN} all rely on a close connection between the $L^2$-homology of a group and its homology with coefficients in certain Novikov rings. The broad outline of each proof is to use vanishing of $L^2$-(co)homology to deduce vanishing of Novikov (co)homology for some finite-index subgroup, and then use vanishing Novikov (co)homology to understand the kernel of the relevant map to $\Z$, or more generally to a nilpotent group. This method is not as directly applicable in the context of residually poly-$\Z$ groups since a good analogue of the Novikov ring for poly-$\Z$ groups does not seem to exist; this is essentially due to the fact that poly-$\Z$ groups are not bi-orderable in general. Instead we show that $\b{n}(G) = 0$ implies that $\h^n(G;\mathcal R) = 0$ for $n = \cd(G)$, where $\mathcal R$ is an \emph{inductive ring} as defined by Okun and Schreve in \cite{OkunSchreve_DawidSimplified} (see \cref{sec:okun_schreve}). The vanishing of $\h^n(G;\mathcal R)$ is sufficient to obtain a dimension drop result.

It is interesting to speculate about the most general class of group where an analogue of \cref{cor:free_by_poly_Z} holds.

\begin{q}\label{q}
    Let $\mathcal C$ be a product closed class of locally indicable groups, and let $G$ be a finitely generated residually $\mathcal C$ group with $\cd_\Q(G) \leqslant 2$. If $\b{2}(G) = 0$, then is $G$ free-by-$\mathcal C$?
\end{q}

Linton proved that \cref{q} has a positive answer if $G$ is a one-relator group \cite[Corollary 3.3]{Linton_residuallyQsolvable}. It also has a positive answer for limit groups by a result of Kochloukova \cite[Theorem A]{Kochloukova_subdirect}, and here $\mathcal C$ can be any class of torsion-free groups.

\subsection*{Acknowledgments}

The first author is supported by  the grant \seqsplit{CEX2023-001347-S} of the Ministry of Science, Innovation and Universities of Spain. The second author is supported by the grants \seqsplit{PID2020-114032GB-I00} and \seqsplit{PID2024-155800NB-C33} of the Ministry of Science, Innovation and Universities of Spain.

\section{Preliminaries}

Throughout this section, $k$ always denotes a field. A group is \emph{locally indicable} if each of its non-trivial finitely generated subgroups admits a map onto $\Z$. A group $G$ is \emph{poly-$\Z$} if there is a subnormal series
\[
    \{1\} = G_n \leqslant G_{n-1} \leqslant \dots \leqslant G_1 \leqslant G_0  = G
\]
for some $n \geqslant 0$ such that $G_{i}/G_{i+1} \cong \Z$ for each $0 \leqslant i < n$. A group is \emph{residually poly-$\Z$} if it admits a chain
\[
    G = N_0 \geqslant N_1 \geqslant \dots
\]
such that $\bigcap_{i \in \N} N_i = \{1\}$, each subgroup $N_i$ is normal in $G$, and $G/N_i$ is poly-$\Z$ for each $i$. Since a group extension of locally indicable groups is locally indicable, it follows by induction that poly-$\Z$ groups are locally indicable. Moreover, residually locally indicable groups are locally indicable, so residually poly-$\Z$ groups are locally indicable.

\begin{rem}
    An arguably more standard definition would be to say that $G$ is residually poly-$\Z$ if every non-trivial element of $G$ survives in a poly-$\Z$ quotient. Since the class of poly-$\Z$ groups is closed under products, this definition is equivalent to the one given above for countable groups. We chose to give the definition above since it is what we use in the proofs, and because in any case the groups we deal with are finitely generated.
\end{rem}

Let $R$ be a ring. An \emph{$R$-ring} is a pair $(S, \varphi)$ where $\varphi \colon R \rightarrow S$ is a ring homomorphism. We will often omit $\varphi$ if it is clear from the context. Two $R$-rings $(S_1, \varphi_1)$ and $(S_2, \varphi_2)$ are \emph{isomorphic} if there exists a ring isomorphism $\alpha \colon S_1 \to S_2$ such that $\varphi_2 = \alpha \circ \varphi_1$. A ring homomorphism $\varphi$ from $R$ to a division ring $\mathcal{D}$ is \emph{epic} if the division closure of the image of $R$ equals $\mathcal{D}$.

\begin{defn}
    Let $G$ be a locally indicable group and let $\mathcal D$ be a division ring such that there is an epic embedding $\varphi \colon k[G] \hookrightarrow \mathcal D$. The embedding $\varphi$ is called \emph{Hughes-free} if for every non-trivial finitely generated subgroup $H \leqslant G$ and every $N \triangleleft H$ such that $H/N \cong \Z$, the multiplication map 
    \[
        \mathcal D_N \otimes_{k[N]} k[H] \rightarrow \mathcal D, \qquad x \otimes y \mapsto x \cdot \varphi(y)
    \]
    is injective. Here, $\mathcal D_N$ denotes the division closure of $\varphi(k[N])$ in $\mathcal D$. In this case, we call $\mathcal D$ a \emph{Hughes-free division ring} for $k[G]$.
\end{defn}

Hughes \cite{HughesDivRings1970} proved that if $k[G]$ admits a Hughes-free embedding, then it is unique up to $k[G]$-algebra isomorphism. In this situation, we thus speak of \emph{the} Hughes-free division ring of $k[G]$, denote it by $\Dk{G}$, and view $k[G]$ as a subring of $\Dk{G}$. If $H$ is any subgroup of $G$, then $\Dk{H}$ is isomorphic to the division closure of $k[H]$ in $\Dk{G}$, so we view $\Dk{H}$ as a subring of $\Dk{G}$.

The following result of Jaikin-Zapirain shows that Hughes-free division rings always exist for group algebras of residually poly-$\Z$ groups.

\begin{thm}[{\cite[Corollary 1.3]{JaikinZapirain2020THEUO}}]
    If $G$ is a residually \{locally indicable and amenable\} group, then $\Dk{G}$ exists for any field $k$.
\end{thm}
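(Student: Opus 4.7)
The plan is to proceed in two stages: first handle the base case where $G$ is itself locally indicable and amenable, then extend to residually such groups by a limiting argument on Sylvester matrix rank functions.

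For the base case, suppose $H$ is locally indicable and amenable. Since locally indicable groups are unique-product groups, $k[H]$ is a domain. Amenability then guarantees that $k[H]$ satisfies the Ore condition (a classical result of Tamari / Donnelly), so the classical ring of fractions $\mathrm{Ore}(k[H])$ exists and is a division ring. To verify the Hughes-free property, take a finitely generated $H_1 \leqslant H$ and $N \triangleleft H_1$ with $H_1/N \cong \Z$; choosing a generator $t$ identifies $k[H_1]$ with the skew Laurent polynomial ring $k[N][t^{\pm 1};\sigma]$, and localising at the non-zero elements of $k[N]$ realises $\mathrm{Ore}(k[H_1])$ as the skew Laurent polynomial ring over $\mathrm{Ore}(k[N])$, from which the injectivity of the multiplication map $\mathrm{Ore}(k[N])\otimes_{k[N]}k[H_1]\to\mathrm{Ore}(k[H_1])$ is immediate.

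For the general case, take a residual chain $G = M_0 \geqslant M_1 \geqslant \cdots$ with trivial intersection and each $G/M_i$ locally indicable amenable. Each $\Dk{G/M_i}$, built in the base case, pulls back along $k[G]\twoheadrightarrow k[G/M_i]$ to a Sylvester matrix rank function $\rk_i$ on $k[G]$. The space of Sylvester rank functions on $k[G]$ is compact, so I would extract a subsequential limit $\rk$. Since $\bigcap_i M_i = \{1\}$, standard approximation arguments show $\rk$ is faithful; and the set of integer-valued (hence division-ring-type) rank functions is closed, so by Cohn's theory of epic division envelopes $\rk$ is induced by an epic embedding $k[G]\hookrightarrow\mathcal D$ into a division ring.

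The main obstacle is verifying that $\mathcal D$ is Hughes-free. Given $H\leqslant G$ finitely generated and $N\triangleleft H$ with $H/N\cong\Z$, I would need to show that the map $\mathcal D_N \otimes_{k[N]} k[H]\to\mathcal D$ is injective, and this must be extracted from the Hughes-freeness of each $\Dk{G/M_i}$. The key point is that injectivity of the multiplication map is equivalent to a numerical identity of ranks (the rank of a matrix with entries from $k[H]$ equals the sum of the appropriate $\mathcal D_N$-ranks of its ``Laurent coefficients''). Provided the residual chain can be refined so that for every such pair $(H,N)$ the $\Z$-quotient $H/N$ eventually surfaces in the quotients $H M_i/M_i$, the identity of ranks passes from each $\rk_i$ to the limit $\rk$. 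Arranging the chain so that this compatibility holds uniformly for all $(H,N)$ simultaneously — essentially a diagonalisation — is the delicate step I would expect to consume most of the work.
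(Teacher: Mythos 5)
First, note that the paper does not prove this statement at all: it is quoted directly from Jaikin-Zapirain \cite[Corollary 1.3]{JaikinZapirain2020THEUO}, so your proposal has to be measured against that work rather than against anything in this note. Your base case is essentially fine (locally indicable $\Rightarrow$ left-orderable $\Rightarrow$ $k[H]$ a domain, amenable domain $\Rightarrow$ Ore, and Hughes-freeness of $\Ore(k[H])$ by clearing denominators; the phrase ``localising at the non-zero elements of $k[N]$ realises $\Ore(k[H_1])$'' only gives the subring $\Ore(k[N])[t^{\pm1};\sigma]$, but the independence of the powers of $t$ over $\Ore(k[N])$ follows anyway). The limiting skeleton is also sound as far as it goes: pulled-back rank functions, compactness of the space of Sylvester rank functions, faithfulness because any fixed nonzero element eventually has rank $1$, and integrality of the limit because a convergent (or ultraconvergent) bounded sequence of integers is eventually constant, so Cohn--Malcolmson theory produces an epic embedding $k[G]\hookrightarrow\mathcal D$.

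The genuine gap is exactly the step you flag and then wave at: proving that the limit $\mathcal D$ is Hughes-free. Your proposed criterion --- ``the rank of a matrix over $k[H]$ equals the sum of the $\mathcal D_N$-ranks of its Laurent coefficients'' --- is false as stated (already $1+t$ has rank $1$, not $2$); the correct rank-theoretic formulation of Hughes-freeness is that the rank on $k[H]\cong k[N][t^{\pm1};\sigma]$ is the \emph{natural extension} of its restriction to $k[N]$, a limit of normalised ranks of large block-truncated matrices, and establishing that such an identity survives the passage $\rk_i\to\rk$ is the actual content of Jaikin-Zapirain's argument, not a formality one can arrange by refining the chain. Your diagonalisation plan also does not engage with the two real obstructions: (i) $N$ need not be finitely generated and need not inject into any quotient $G/M_i$, and the division closure $\mathcal D_N$ inside the limit ring is not visibly approximated by the division closures of the images $NM_i/M_i$ at finite stages (elements of $\mathcal D_N$ are only reachable through Cramer-type linear systems over $k[N]$, and controlling their ranks along the limit is delicate); and (ii) amenability of the quotients must enter the limiting argument itself --- in the cited proof it does so through the Ore property/natural extensions at every level and ultimately through the universality theorem for Hughes-free division rings that is the main result of that paper --- whereas in your outline amenability is used only in the base case. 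As it stands, the proposal reproduces the easy parts of the cited proof and leaves its core unproved.
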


If $\mathcal D$ is a division $k[G]$-ring, then $\mathcal D$ inherits the structure of a $k[G]$-module, and therefore we can compute homology of $G$ with coefficients in $\mathcal D$. Recall that every module over $\mathcal D$ has a well-defined dimension.

\begin{defn}
    Let $\mathcal D$ be a division $k[G]$-ring and $n$ a non-negative integer. The $n$th \emph{$\mathcal D$-Betti number} of $G$ is the (possibly infinite) value
    \[
        b_n^\mathcal D (G) = \dim_\mathcal D \h_n(G;\mathcal D) \in \Z_{\geq 0} \cup \{\infty\}.
    \]
\end{defn}

It is not difficult to see that $\h_n(G;\mathcal D)$ and $\h^n(G;\mathcal D)$ are either both infinite-dimensional (but perhaps of different cardinalities) or have the same (finite) dimension (the proof of this is the same as the proof that $\h_n(G;\Q)$ and $\h^n(G;\Q)$ are isomorphic as $\Q$-vector spaces when they are both finite-dimensional). This observation will be important in the proof of \cref{thm:main}, since we will work with the vanishing of cohomology rather than homology.

When $G$ is locally indicable, $\mathcal D_{\Q[G]}$ always exists and coincides with the division closure of $\Q[G]$ in $\mathcal U(G)$, the algebra of affiliated operators, by a result of Jaikin-Zapirain and L\'opez-\'Alvarez \cite{JaikinLopezStrongAtiyah2020}. It follows that $b_n^{\mathcal D_{\Q[G]}}(G)$ coincides with the classical $n$th $L^2$-Betti number of $G$, denoted by $\b{n}(G)$. We refer the reader to \cite[Chapter 10]{Luck02} for more information on this topic. When $\mathcal D_{\mathbb F_p[G]}$ exists, the invariants $b_n^{\mathcal D_{\mathbb F_p[G]}}(G)$ provide a natural mod $p$ generalisation of the $L^2$-Betti numbers, and behave in a very similar way. For this reason they are sometimes denoted by $\b{n}(G;\mathbb F_p)$.

We will also need the following construction of division rings, due to Malcev \cite{Malcev_series} and Neumann \cite{Neumann_series}. First, we say that a ring $S$ is a \emph{crossed product} of a ring $R$ and a group $G$ if its underlying additive group decomposes as a direct sum $\bigoplus_{g \in G} R_g$, where
\begin{enumerate}
    \item each $R_g$ is an additive subgroup of $S$;
    \item $R_g R_h \subseteq R_{gh}$ for all $g,h \in G$;
    \item there are units $u_g \in R_g$ for all $g \in G$;
    \item $R_e$ is a subring of $S$ isomorphic to $R$.
\end{enumerate}
In this case, we denote $S$ by $R * G$, even though the crossed product structure may not be unique. By an abuse of notation, we will usually denote the unit $u_g$ by $g$; with this notation, every element of $R*G$ is expressible as a finite formal sum $\sum_{g \in G} r_g g$ with $r_g \in R$ for each $g \in G$. Given a (possibly infinite) formal sum $r = \sum_{g \in G} r_g g$ its \emph{support}, denoted $\supp(r)$, is defined as the set $\{g \in G : r_g \neq 0\}$.

\begin{defn}
    Let $G$ be a bi-orderable group, let $R$ be a ring, and let $R * G$ be a crossed product. The \emph{Malcev--Neumann power series} ring $R *_< G$ has underlying set all formal power series 
    \[
        R *_< G = \left\{ \sum_{g \in G} r_g g \ : \ r_g \in R, \  \supp(r) \ \text{is well-ordered w.r.t.} < \right\}.
    \]
    The addition on $R *_< G$ is defined pointwise, and the multiplication naturally extends that on $R*G$ (see \cite{Malcev_series} or \cite{Neumann_series} for details).
\end{defn}

The main theorem of Malcev and Neumann is that $R*_< G$ is a division ring whenever $R$ is a division ring.

Suppose that $G$ is a locally indicable group such that $\Dk{G}$ exists and let $H$ be a normal subgroup of $G$ such that $G/H$ is bi-orderable; fix an order $<$ on $G/H$. By the uniqueness of Hughes-free division rings, the conjugation action of $G$ on $k[H]$ extends to an action on $\Dk{H}$, and it is thus possible to form the crossed product $\Dk{H}*G/H$.

For any subgroup $A \leqslant G$, there is an embedding of Malcev--Neumann rings
\[
    \mathcal K_A := \Dk{A \cap H} *_< A/A \cap H \hookrightarrow \Dk{H} *_< G/H,
\]
and we leave it as an exercise to verify that the multiplication map
\[
    \mathcal K_A \otimes_{k[A]} k[G] \rightarrow \Dk{H} *_< G/H
\]
is injective. This implies that the division closure of $k[G]$ in $\Dk{H} *_< G/H$ is a Hughes-free division ring for $k[G]$, and therefore, by Hughes's uniqueness theorem \cite{HughesDivRings1970}, there is an embedding $\iota_< \colon \Dk{G} \hookrightarrow \Dk{H} *_< G/H$.

\section{The Okun--Schreve construction}\label{sec:okun_schreve}

In this section we recount the construction of inductive rings due to Okun and Schreve \cite{OkunSchreve_DawidSimplified}, and construct a class of Malcev--Neumann type modules over the inductive rings which will be used in the proof of the main result. We continue to use $k$ to denote an arbitrary (but fixed) field.

Let $G$ be a locally indicable group such that $\Dk G$ exists, and let $B \leqslant A$ be a pair of normal subgroups of $G$ such that $A/B$ is bi-orderable and amenable. Then each choice of bi-ordering $<$ on $A/B$ induces an injective homomorphism
\[
    \iota_< \colon \Dk{A} \hookrightarrow \Dk{B} *_< A/B
\]
as explained at the end of the previous section.

Let $R \leqslant \Dk{B}$ be a subring that is invariant under the conjugation action of $G$. It then makes sense to consider the subring $R *_< A/B$ of $\Dk{B} *_< A/B$, i.e.\ the subring of all formal power series with coefficients in $R$ and well-ordered support.

\begin{defn}
     With the above notation, the corresponding \emph{invariant Malcev--Neumann ring} is the subring $R * \langle A/B \rangle$ of $\Dk{A}$ consisting of the elements whose images under $\iota_<$ lie in $R *_< A/B$ for each bi-ordering $<$ on $A/B$. More formally,
    \[
        R * \langle A/B \rangle = \bigcap_{<} \iota_<\inv(R *_< A/B),
    \]
    where the intersection is taken over all bi-orderings of $A/B$.
\end{defn}

Importantly, $R * \langle A/B \rangle$ contains $R * A/B$ and it is invariant under the conjugation action of $G$ on $\Dk{A}$.

\begin{rem}
    It is perhaps not clear why the amenability assumption was imposed on $A/B$. Indeed, it is not strictly needed in order to define the invariant Malcev--Neumann rings, but it is needed in Okun and Schreve's proof of \cref{thm:OS} below. The key point is that crossed products of division rings with locally indicable amenable groups are \emph{Ore domains}, and thus admit embeddings into division rings of formal fractions in much the same way that an integral domain admits an embedding into its field of formal fractions.
\end{rem}

Now suppose that $G$ (still a locally indicable group such that $\Dk{G}$ exists) admits a normal residual chain
\[
    G = N_0 \geqslant N_1 \geqslant N_2 \geqslant \dots
\]
such that $N_i/N_{i+1}$ is bi-orderable and amenable for all $i \geqslant 0$. For each $i \geqslant 0$ we may form the \emph{inductive ring}
\[
    \mathcal R_i = k[N_i] * \langle N_{i-1}/N_i \rangle * \cdots * \langle N_0/N_1 \rangle.
\]
We have chosen not to bracket the expression for $\mathcal R_i$ for notational ease, and because there is only one possible way to parse the expression for it to make sense. Note that $\mathcal R_i$ depends on the chosen residual chain for $G$, but this choice will be clear from the context, so we suppress it from the notation. Note that $\mathcal R_0 = k[G]$ and $\mathcal R_i \leqslant \mathcal R_{i+1}$ for all $i \geqslant 0$. We now state the main result of Okun and Schreve.

\begin{thm}[{\cite[Theorem 5.1]{OkunSchreve_DawidSimplified}}]\label{thm:OS}
    With the notation above, $\Dk{G} = \bigcup_{i \geqslant 0} \mathcal R_i$.
\end{thm}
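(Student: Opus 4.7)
The containment $\bigcup_{i \geqslant 0} \mathcal R_i \subseteq \Dk G$ is immediate from the construction, so the substance of the theorem is the reverse inclusion. My plan is to show that $\mathcal S := \bigcup_{i \geqslant 0} \mathcal R_i$ is a Hughes-free division ring for $k[G]$; Hughes's uniqueness theorem will then identify $\mathcal S$ with $\Dk G$.

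The Hughes-free condition and epicness transfer from $\Dk G$ for essentially free. Given a finitely generated $H \leqslant G$ and $N \triangleleft H$ with $H/N \cong \Z$, the multiplication map $\mathcal S_N \otimes_{k[N]} k[H] \to \mathcal S$ is a restriction of the corresponding injective map into $\Dk G$, and epicness is automatic once $\mathcal S$ is known to be a division ring (since $\mathcal R_0 = k[G]$ generates $\mathcal S$). So the entire content is concentrated in showing that $\mathcal S$ is closed under inverses in $\Dk G$.

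The strategy for closure under inverses is to express a given $0 \neq x \in \mathcal R_i$ as an invariant Malcev--Neumann series $\sum_g x_g g$ over $N_0/N_1$ with coefficients in the ``lower tower'' $\mathcal R_i^{(1)} := k[N_i] * \langle N_{i-1}/N_i\rangle * \cdots * \langle N_1/N_2 \rangle$, which is the analogous inductive ring for $N_1$ equipped with the tail chain $N_1 \geqslant N_2 \geqslant \cdots$. Amenability of $N_0/N_1$ makes $\Dk{N_1} * N_0/N_1$ an Ore domain whose Ore field of fractions is $\Dk G$; combining this with the standard Malcev--Neumann formula $x^{-1} = (1 + y)^{-1} g_0^{-1} x_{g_0}^{-1}$, where $g_0$ is the $<$-minimal element of $\supp x$ in a given bi-ordering $<$, produces a well-ordered expansion of $\iota_<(x^{-1})$ in $\Dk{N_1} *_< N_0/N_1$.

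The main obstacle is to ensure that the coefficients of these expansions lie in $\mathcal R_j^{(1)}$ for some uniform $j \geqslant i$, and to do so consistently across \emph{all} bi-orderings of $N_0/N_1$ — a statement which is essentially \cref{thm:OS} applied internally to $N_1$. Consequently the argument wants to be organised as a single induction, with closure under inverses established simultaneously at every level of the residual chain, rather than as a recursion that peels off quotients one at a time. Coherence across different bi-orderings is supplied by the uniqueness of $x^{-1}$ inside $\Dk G$: each $\iota_<(x^{-1})$ is determined by $x$ alone, so the support and coefficient conditions propagate uniformly once they are verified at one ordering. This careful bookkeeping between the Ore expansion, the Malcev--Neumann power series description, and the intersection over all bi-orderings is the hard part of Okun--Schreve's argument.
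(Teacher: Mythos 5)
A point of comparison first: the paper does not prove this statement at all --- it is quoted directly from Okun--Schreve \cite[Theorem 5.1]{OkunSchreve_DawidSimplified}, so there is no internal proof to compare against and your argument has to stand on its own.

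As written it does not. Your reduction is sound up to a point: since the embedding $k[G] \hookrightarrow \Dk{G}$ is epic, it suffices to show that $\mathcal S = \bigcup_{i} \mathcal R_i$ is closed under inverses of non-zero elements inside $\Dk{G}$ (the Hughes-free verification is then superfluous, as a division-closed subring containing $k[G]$ is already all of $\Dk{G}$), and the outline --- expand $x \in \mathcal R_i$ over $N_0/N_1$ with coefficients in the lower tower, invert using the Ore property of $\Dk{N_1}*G/N_1$ and the Malcev--Neumann leading-term/geometric-series formula --- is the right kind of strategy. But you explicitly leave the decisive step unproved (``this careful bookkeeping \dots is the hard part''), and the one concrete mechanism you offer in its place is incorrect: uniqueness of $x^{-1}$ in $\Dk{G}$ does not let you check the support and coefficient conditions at one bi-ordering and then ``propagate'' them to the rest. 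For different bi-orderings $<$ the series $\iota_<(x^{-1})$ are genuinely different expansions, with different minimal support elements $g_0$ and hence different leading coefficients to invert; membership in $\mathcal R_j$ is by definition an intersection condition over all (uncountably many) bi-orderings, so one needs the coefficient containment, the well-ordered support, and a single level $j$ uniformly in $<$, and nothing in the sketch supplies this. Similarly, the proposed ``single induction over all levels'' has no identified well-founded parameter: inverting a leading coefficient pushes you deeper into an infinite chain whose innermost layers are group rings rather than division rings, and it is precisely the termination and uniformity of this recursion that constitute the content of Okun--Schreve's theorem. In short, the proposal is a plausible roadmap for their argument, not a proof of the statement.
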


We conclude this section with a new construction of modules over the inductive rings. Continuing with the notation above, we will say that a right order on $G/N_i$ is \emph{induced} (with respect to the chain $(N_i)_{i \geqslant 0}$) if it is obtained by choosing bi-orders on each of the quotients $N_j/N_{j+1}$ for $0 \leqslant j < i$, and then endowing $G/N_i$ with the induced lexicographic right-invariant order.

Let $M$ be a right $k[N_i]$-module, let $<$ be any right-invariant order on $G/N_i$, and let $T$ be a transversal for $N_i$ in $G$. Given a (possibly infinite) formal sum $m = \sum_{t \in T} m_t t$ define its \emph{support}, denoted $\supp(m)$, as the set $\{\overline t \in G/N_i : m_t \neq 0\}$. Define a right $k[G]$-module with underlying set
\[
    M *_< G/N_i = \left\{ \sum_{t \in T} m_t t \ : \ m_g \in M, \ \supp(m) \ \text{is well-ordered w.r.t.} < \right\}.
\]
The module structure is obtained by viewing $M *_< G/N_i$ as a subset of the coinduced module 
\[
    \operatorname{coInd}_{N_i}^G M = \Hom_{k[N_i]}(k[G],M).
\]
If $<$ was a bi-ordering, then $M *_< G/N_i$ would inherit a $k[N_i]*_< G/N_i$-module structure. In the case that $<$ is only induced, we still have the following key result.

\begin{lem}\label{lem:induced_order_module_structure}
    If $<$ is an induced right order on $G/N_i$ and $M$ is a right $k[N_i]$-module, then $M *_< G/N_i$ carries a right $\mathcal R_i$-module structure extending its $k[G]$-module structure.
\end{lem}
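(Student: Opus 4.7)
The plan is to prove the lemma by induction on $i$, after first strengthening the statement to allow the innermost ring $k[N_i]$ to be replaced by any $G$-invariant subring $R \subseteq \Dk{N_i}$ containing $k[N_i]$, with $M$ a right $R$-module; the strengthened version is closed under the inductive step described below, while the original statement corresponds to $R = k[N_i]$. The base case $i = 0$ is immediate, since $G/N_0$ is trivial and $\mathcal R_0 = R$, so $M *_< G/N_0 = M$ with its given $R$-module structure.

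For the inductive step, I would decompose $<$ as the lex order built from a bi-order $<_{i-1}$ on $N_{i-1}/N_i$ together with an induced order $<'$ on $G/N_{i-1}$. First, form $M' := M *_{<_{i-1}} N_{i-1}/N_i$ and observe that, since $<_{i-1}$ is a bi-order and $N_{i-1}/N_i$ acts on $R$ via (a choice of lift of) conjugation, the classical Malcev--Neumann finiteness lemma equips $M'$ with the structure of a right module over the crossed-product Malcev--Neumann ring $R *_{<_{i-1}} N_{i-1}/N_i$. Restricting via the inclusion $R * \langle N_{i-1}/N_i \rangle \subseteq R *_{<_{i-1}} N_{i-1}/N_i$ (which follows immediately from the definition of the invariant Malcev--Neumann subring), $M'$ becomes a right module over the $G$-invariant subring $R' := R * \langle N_{i-1}/N_i \rangle \subseteq \Dk{N_{i-1}}$. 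Applying the strengthened inductive hypothesis to the shorter chain $G = N_0 \geqslant \cdots \geqslant N_{i-1}$ with base ring $R'$, module $M'$, and induced order $<'$ yields that $M' *_{<'} G/N_{i-1}$ is a right $\mathcal R_i$-module, using that $R' * \langle N_{i-2}/N_{i-1} \rangle * \cdots * \langle N_0/N_1 \rangle = \mathcal R_i$.

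The final step is to identify $M *_< G/N_i$ set-theoretically with $M' *_{<'} G/N_{i-1}$ via compatible transversals for the chain: lex-well-orderedness of a support in $G/N_i$ is equivalent to its projection being well-ordered in $G/N_{i-1}$ with each fiber well-ordered in $N_{i-1}/N_i$. Transporting the $\mathcal R_i$-module structure across this identification completes the construction. The main obstacle is checking that the transported action extends the coinduction $k[G]$-module structure on $M *_< G/N_i$; this reduces to a restricted transitivity of coinduction---that the two-step coinduction $M \to M' \to M' *_{<'} G/N_{i-1}$ agrees with the direct coinduction $M \to \operatorname{coInd}_{N_i}^G M$---which is routine to verify by unfolding the transversal decompositions at both levels.
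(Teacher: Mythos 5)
Your proposal is correct and is essentially the paper's own argument, repackaged: the paper proves the lemma by a reverse induction on the layer index $j$, showing $L_j = M *_< N_j/N_i$ is a module over $S_j = k[N_i] * \langle N_{i-1}/N_i \rangle * \cdots * \langle N_j/N_{j+1} \rangle$ by applying the Malcev--Neumann finiteness argument one layer at a time and restricting along the embedding $S_j \hookrightarrow S_{j+1} *_{<_j} N_j/N_{j+1}$ induced by $\iota_{<_j}$, which is exactly what your induction on the chain length with a strengthened base ring does (your iterated rings $R'$ are the paper's $S_j$). The only cosmetic difference is your explicit treatment of the fiberwise well-ordering identification and the compatibility with the coinduced $k[G]$-structure, which the paper leaves implicit.
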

\begin{proof}
    For each $j < i$, let $<_j$ be a bi-order on $N_j/N_{j+1}$, and suppose that $<$ is induced from these orders. Then
    \[
        M *_< G/N_i \cong M *_{<_{i-1}} N_{i-1}/N_i *_{<_{i-2}} \cdots *_{<_0} N_0/N_1.
    \]
    By reverse induction on $j$, we will prove that 
    \[
        L_j = M *_< N_j/N_i \cong M *_{<_{i-1}} N_{i-1}/N_i *_{<_{i-2}} \cdots *_{<_j} N_j/N_{j+1}
    \]
    is an $S_j$-module, where 
    \[
        S_j = k[N_i] * \langle N_{i-1}/N_i \rangle * \cdots * \langle N_j/N_{j+1} \rangle.
    \]
    For $j = i$ there is nothing to prove; this is just the statement that $M$ is a $k[N_i]$-module. Now assume $j < i$. By induction,
    \[
        L_j \cong L_{j+1} *_{<_j} N_j/N_{j+1}
    \]
    is an $S_{j+1} *_{<_j} N_j/N_{j+1}$-module. But by definition, $\iota_{<_j}$ induces an injection
    \[
        S_j \cong S_{j+1} * \langle N_j/N_{j+1} \rangle \hookrightarrow S_{j+1} *_{<_j} N_j/N_{j+1},
    \]
    thus giving $L_j$ an $S_j$-module structure. Taking $j = 0$ concludes the proof. \qedhere
\end{proof}

\section{Proof of the main result}

We make a preliminary observation about residually poly-$\Z$ groups, which will allow us to use \cref{thm:OS}.

\begin{lem}\label{lem:res_polyZ_res_chain}
    Let $G$ be a residually poly-$\Z$ group. Any normal residual chain $(M_i)_{i \geqslant 0}$ of $G$ such that $G/M_i$ is poly-$\Z$ for all $i$ can be refined to a normal residual chain
    \[
        G = N_0 \geqslant N_1 \geqslant N_2 \geqslant \dots
    \]
    such that $N_i/N_{i+1}$ is a finitely generated free Abelian group for each $i \geqslant 0$.
\end{lem}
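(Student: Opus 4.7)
The plan is to refine each step $M_{i} \geqslant M_{i+1}$ of the given chain separately into a finite chain of $G$-normal subgroups with free abelian quotients, and then to concatenate these pieces. The key ingredient I will establish is the following claim: every non-trivial normal subgroup $H$ of a poly-$\Z$ group $P$ admits a characteristic subgroup $H' \leqslant H$ such that $H/H'$ is finitely generated free abelian of \emph{positive} rank.

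To prove the claim, I would take $H'$ to be the preimage in $H$ of the torsion subgroup $T$ of the abelianization $H^{\mathrm{ab}}$. Since $H$ is finitely generated (being a subgroup of a polycyclic group), $H^{\mathrm{ab}}$ is a finitely generated abelian group, so $H/H' \cong H^{\mathrm{ab}}/T$ is finitely generated free abelian. The subgroup $H'$ is characteristic in $H$ because it is defined intrinsically from the group structure of $H$. To see that the rank is positive, note that $H$ is itself non-trivial poly-$\Z$ (subgroups of poly-$\Z$ groups are poly-$\Z$), so any subnormal series for $H$ with infinite cyclic quotients provides a surjection $H \twoheadrightarrow \Z$, forcing $H^{\mathrm{ab}}$ to have positive free rank.

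Given the claim, I would apply it iteratively. Fix $i \geqslant 0$ and set $H_0 = M_i/M_{i+1}$, which is a normal subgroup of the poly-$\Z$ group $G/M_{i+1}$. Recursively define $H_{j+1} = H_j'$ as long as $H_j \neq \{1\}$. Since characteristic subgroups of normal subgroups are normal, an easy induction shows that each $H_j$ is normal in $G/M_{i+1}$. The Hirsch length of $H_j$ strictly drops at each step (by the positive rank of $H_j/H_{j+1}$), so the iteration terminates at the trivial subgroup after finitely many steps. Pulling everything back to $G$ produces a finite chain of $G$-normal subgroups $M_i = L_0 \geqslant L_1 \geqslant \cdots \geqslant L_{r_i} = M_{i+1}$ with each successive quotient finitely generated free abelian.

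Concatenating these finite refinements over all $i \geqslant 0$ yields the desired chain $(N_j)_{j \geqslant 0}$. The intersection condition $\bigcap_j N_j = \{1\}$ is automatic because every $M_i$ appears among the $N_j$ and $\bigcap_i M_i = \{1\}$ by hypothesis. I do not anticipate a serious obstacle here: the argument reduces to standard properties of poly-$\Z$ groups (subgroup-closure, finite generation, and the existence of a surjection onto $\Z$ from any non-trivial such group) together with the principle that characteristic-in-normal is normal.
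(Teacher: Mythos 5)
Your proof is correct and follows essentially the same route as the paper: refine each quotient $M_i/M_{i+1}$ (poly-$\Z$, being normal in the poly-$\Z$ group $G/M_{i+1}$) by an iterated characteristic series with finitely generated free Abelian quotients, lift to $G$ using that characteristic-in-normal is normal, and concatenate. If anything, your choice of $H'$ as the preimage of the torsion subgroup of $H^{\mathrm{ab}}$ is slightly more careful than the paper's literal use of $[P_i,P_i]$, whose quotient need not be torsion-free (e.g.\ the Klein bottle group), though the fix is the same one you made.
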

\begin{proof}
    Let $P$ be a poly-$\Z$ group. Note that $P$ is finitely generated. Then $P$ has a characteristic series
    \[
        \{1\} = P_n \leqslant \dots \leqslant P_1 \leqslant P_0 = P
    \]
    for some integer $n$, where each quotient is finitely generated free Abelian. Indeed, one can let $P_{i+1} = [P_i,P_i]$. Poly-$\Z$ groups---essentially by definition---have infinite Abelianisation, so $\cd(P_{i+1}) < \cd(P_i) \leqslant \cd(P) < \infty$. Since $P_{i+1}$ is again a poly-$\Z$ group, this process eventually terminates.

    We can now prove the lemma. Each consecutive quotient $M_i/M_{i+1}$ is poly-$\Z$, so by the paragraph above has a characteristic series with finitely generated free Abelian consecutive quotients. For each $i$, lift the terms of this characteristic series to $G$ to obtain a refinement of $(M_i)_{i \geqslant 0}$; this is the desired normal residual chain. \qedhere
\end{proof}

Recall that a group $G$ is of \emph{type $\FP(k)$} if the trivial $k[G]$-module $k$ admits a finite length resolution by finitely generated projective modules. For a commutative ring $R$, we say that $\cd_R(G) = n$ if $n$ is the largest integer such that there exists an $R[G]$-module $M$ with $\h^n(G;M) \neq 0$. \cref{thm:intro_main} follows immediately from the result below by taking $k = \Q$.

\begin{thm}\label{thm:main}
    Let $G$ be a residually poly-$\Z$ group of type $\FP(k)$, where $k$ is a field. Assume that $\cd_k(G) = n$. The following are equivalent:
    \begin{enumerate}
        \item\label{item:top_betti_zero} $b_n^{\mathcal D_{k[G]}}(G) = 0$;
        \item\label{item:dim_drop} for any normal residual chain $G = N_0 \geqslant N_1 \geqslant \dots$ such that $G/N_i$ is poly-$\Z$ for each index $i$, there is some $i \in \N$ such that $\cd_k(N_i) < n$.
    \end{enumerate}
\end{thm}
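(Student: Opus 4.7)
The plan is to prove $(2) \Rightarrow (1)$ via an amenable-extension argument, and $(1) \Rightarrow (2)$ by extracting information from the Okun--Schreve presentation $\Dk{G} = \bigcup_i \mathcal R_i$ together with the $\FP(k)$ finiteness hypothesis.

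For $(2) \Rightarrow (1)$, suppose $N \trianglelefteq G$ has $G/N$ poly-$\Z$ and $\cd_k(N) < n$. Then $G/N$ is infinite, locally indicable, and amenable, so $\Dk{G}$ is the Ore localisation of the crossed product $\Dk{N} * G/N$. In particular $\Dk{G}$ is flat over $k[N]$, and the Hochschild--Serre spectral sequence for $N \trianglelefteq G$ with coefficients in $\Dk{G}$, combined with the vanishing $\h^q(N; \Dk{G}) = 0$ for $q > \cd_k(N)$ and an analysis of the top-degree term exploiting amenability of $G/N$, forces $\h^n(G; \Dk{G}) = 0$. This is a Hughes-free analogue of Gaboriau's theorem for amenable extensions.

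For $(1) \Rightarrow (2)$, by \cref{lem:res_polyZ_res_chain} the chain may be refined so that each $N_i/N_{i+1}$ is free Abelian, bringing us into the Okun--Schreve setting. Since $G$ is of type $\FP(k)$ and $\cd_k(G) = n$, we have $b_n^{\Dk{G}}(G) = 0$ if and only if $\h^n(G; \Dk{G}) = 0$. By \cref{thm:OS}, $\Dk{G} = \bigcup_i \mathcal R_i$, and because $G$ is in particular $\FP_n(k)$ the functor $\h^n(G; -)$ commutes with filtered colimits, so $\varinjlim_i \h^n(G; \mathcal R_i) = 0$. Using a length-$n$ resolution $P_\bullet \to k$ with each $P_i$ finitely generated projective, $\h^n(G; \mathcal R_i)$ is exhibited as a quotient of $\Hom_{k[G]}(P_n, \mathcal R_i) \cong \mathcal R_i^{\rk(P_n)}$, hence finitely generated as an $\mathcal R_i$-module. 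A standard argument then produces, for each $i$, an index $j \geq i$ with the structure map $\h^n(G; \mathcal R_i) \to \h^n(G; \mathcal R_j)$ zero.

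The crux of the proof is translating this vanishing into the vanishing of $\h^n(N_j; k[N_j])$ for some $j$. To this end I would employ \cref{lem:induced_order_module_structure}: for an induced right order $<$ on $G/N_j$, the module $M_j := k[N_j] *_< G/N_j$ sits inside $\operatorname{coInd}_{N_j}^G k[N_j]$ and carries an $\mathcal R_j$-action extending its $k[G]$-action. A Shapiro-style comparison, leveraging the finer $\mathcal R_j$-module structure to overcome the fact that $M_j$ is a proper submodule of the coinduced module, should yield a surjection from a relevant piece of $\h^n(G; \mathcal R_j)$ onto $\h^n(N_j; k[N_j])$, forcing the latter to vanish for large $j$. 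Because both $G$ and $G/N_j$ are of type $\FP(k)$, the subgroup $N_j$ is also $\FP(k)$ and $\cd_k(N_j) = \max\{m : \h^m(N_j; k[N_j]) \neq 0\}$, giving the required dimension drop. The Shapiro-type transfer from $\h^n(G; M_j)$ to $\h^n(N_j; k[N_j])$ is the main technical obstacle, since the classical lemma applies only to the full coinduced module and the $\mathcal R_j$-action on $M_j$ must do essential work to bridge the gap.
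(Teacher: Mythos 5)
Your overall strategy for $(1)\Rightarrow(2)$ follows the paper's architecture (refine the chain, use \cref{thm:OS} and finite generation in degree $n$ to push vanishing from $\Dk{G}$ down to some $\mathcal R_i$, then exploit \cref{lem:induced_order_module_structure}), but the step you yourself flag as ``the main technical obstacle'' is precisely the paper's key idea, and it is missing. The paper does not attempt a Shapiro-type transfer from a single Malcev--Neumann module $M *_< G/N_i$ to $\h^n(N_i;k[N_i])$. Instead (\cref{claim:epimorphism}) it fixes bases of the free Abelian quotients $N_j/N_{j+1}$, takes \emph{all} $2^l$ induced lexicographic right orders on $G/N_i$, and observes that every subset of $\Z^d$ decomposes into $2^d$ pieces, each well-ordered for one of the $2^d$ lexicographic orders (one per orthant); hence the $k[G]$-module map $\bigoplus_p L_p \to \operatorname{coInd}_{N_i}^G M$ from the direct sum of these $\mathcal R_i$-modules is \emph{surjective}. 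Since $\cd_k(G)=n$, the induced map on $\h^n(G;-)$ is still surjective, and $\h^n(G;L_p)\cong L_p\otimes_{\mathcal R_i}\h^n(G;\mathcal R_i)=0$. Note also that for this last step you need the actual vanishing $\h^n(G;\mathcal R_i)=0$ (which follows as in the paper from $\h^n(G;-)\cong \h^n(G;k[G])\otimes_{k[G]}-$ for an $\FP(k)$ group of $\cd_k=n$ and finite generation of $\h^n(G;k[G])$), not merely your weaker conclusion that some structure map $\h^n(G;\mathcal R_i)\to\h^n(G;\mathcal R_j)$ is zero.

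Your endgame is also flawed: you aim only at $\h^n(N_j;k[N_j])=0$ and assert that $N_j$ is of type $\FP(k)$ because $G$ and $G/N_j$ are. That implication is false in general (e.g.\ the Stallings--Bieri kernels of $F_2\times F_2\to\Z$ are not even $\FP_2$ although the ambient group and quotient are of type $\FP$), and without such finiteness the vanishing of $\h^n(N_j;k[N_j])$ does not control $\cd_k(N_j)$: one needs $\h^n(N_j;F)=0$ for all free (equivalently all) modules. The paper sidesteps this entirely by running the argument with an \emph{arbitrary} $k[N_i]$-module $M$ and concluding $\h^n(N_i;M)\cong\h^n(G;\operatorname{coInd}_{N_i}^G M)=0$, which gives $\cd_k(N_i)<n$ with no finiteness assumption on $N_i$. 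Finally, for $(2)\Rightarrow(1)$ your Hochschild--Serre sketch is not complete as stated, since the terms $\h^p(G/N;\h^q(N;\Dk{G}))$ with $p\geqslant 1$ and $p+q=n$ have no reason to vanish; the standard argument (and the cited \cite[Proposition 3.18]{FisherKlinge_RPVN}, which the paper simply invokes) goes through flatness of $\Dk{G}$ over the Ore domain $\Dk{N}*G/N$ together with the identification $\Tor_*^{k[G]}(\Dk{N}*G/N,k)\cong\Tor_*^{k[N]}(\Dk{N},k)$, which vanishes above $\cd_k(N)$.
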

\begin{proof}
    That \ref{item:dim_drop} implies \ref{item:top_betti_zero} is a consequence of \cite[Proposition 3.18]{FisherKlinge_RPVN}.

    Now suppose that \ref{item:top_betti_zero} holds. We may assume that the normal residual chain $(N_i)_{i \geqslant 0}$ has finitely generated free Abelian consecutive quotients by \cref{lem:res_polyZ_res_chain}. 

    \begin{claim}\label{claim:pass_to_invMN}
        There is some $i$ such that $\h^n(G;\mathcal R_i) = 0$.
    \end{claim}
    \begin{proof}
        By \cite[Proposition VIII.6.8]{BrownGroupCohomology}, we have
        \[
            0 = \h^n(G;\Dk{G}) \cong \Dk{G} \otimes_{k[G]} \h^n(G;k[G]).
        \]
        Since $G$ is of type $\FP(k)$, the left $k[G]$-module $\h^n(G;k[G])$ is finitely generated, say by elements $x_1, \dots, x_s$ for some $s \in \N$.
    
        By \cref{thm:OS}, $\Dk{G} = \bigcup_{i \geqslant 0} \mathcal R_i$, where the rings $\mathcal R_i$ are the inductive rings associated to the residual normal chain $(N_i)$ and defined in \cref{sec:okun_schreve}. Because tensor products commute with directed limits, we obtain
        \[
            0 = \h^n(G;\Dk{G}) \cong \varinjlim_i \mathcal R_i \otimes_{k[G]} \h^n(G;k[G]).
        \]
        Hence, there is some $i$ such that $1 \otimes x_j = 0$ in $\mathcal R_i \otimes_{k[G]} \h^n(G;k[G])$ for all $j \leqslant s$, so we have 
        \[
            \h^n(G;\mathcal R_i) \cong \mathcal R_i \otimes_{k[G]} \h^n(G;k[G]) = 0,
        \]
        where we have again applied \cite[Proposition VIII.6.8]{BrownGroupCohomology}. \renewcommand\qedsymbol{$\diamond$}
    \end{proof}

    We will now show that $\cd_k(N_i) < n$. To this end, let $M$ be an arbitrary right $k[N_i]$-module; our goal is to prove that 
    \[
        \h^n(N_i; M) \cong \h^n(G; \operatorname{coInd}_{N_i}^G M) = 0.
    \]
    If $\operatorname{coInd}_{N_i}^G M$ were an $\mathcal R_i$-module, we would be done. While this is not the case in general, the following claim will be sufficient to obtain the vanishing.
    
    \begin{claim}\label{claim:epimorphism}
        There is a collection of $\mathcal R_i$-modules $L_1, \dots, L_{2^l}$ for some integer $l$ such that there exists an epimorphism $\bigoplus_{1 \leqslant p \leqslant 2^l} L_p \rightarrow \operatorname{coInd}_{N_i}^G M$ of $k[G]$-modules.
    \end{claim}
    \begin{proof}
        The poly-$\Z$ group $G/N_i$ has the characteristic series with finitely generated free Abelian consecutive quotients 
        \[
            \{1\} = N_i/N_i \leqslant N_{i-1}/N_i \leqslant \cdots \leqslant G/N_i.
        \]
        Fixing a basis for each consecutive quotient $N_j/N_{j+1}$ induces $2^{b_j}$ preferred lexicographic bi-orders on the free Abelian group, where $b_j$ is the rank of $N_j/N_{j+1}$. The modules $L_p$ in the statement are then the $2^l$ modules $M *_< G/N_j$ for each of the induced orders on $G/N_j$, where $l = b_0 + \cdots + b_{i-1}$. By \cref{lem:induced_order_module_structure}, each module $L_p$ is an $\mathcal R_i$-module.

        Each module $L_p$ is naturally a submodule of $\operatorname{coInd}_{N_i}^G M$, so the inclusion maps induce a homomorphism
        \[
            \bigoplus_{1 \leqslant p \leqslant 2^l} L_p \rightarrow \operatorname{coInd}_{N_i}^G M
        \]
        of $k[G]$-modules. Induction on $i$ and the following observation together show that this map is in fact a surjection: if $S \subseteq \Z^d$ is any subset, then there is a decomposition
        \[
            S = S_1 \cup \cdots \cup S_{2^d}
        \]
        where each $S_i$ is well-ordered with respect to at least one of the $2^d$ lexicographic bi-orders on $\Z^n$ coming from the standard basis elements (i.e.\ one could take $S_j$ to be the intersection of $S$ with the $j$th orthant). \renewcommand\qedsymbol{$\diamond$}
    \end{proof}

    We are now ready to conclude the proof. By \cref{claim:epimorphism}, the long exact sequence in group cohomology associated to a short exact sequence of coefficients, and the fact that $\cd_k(G) = n$, there is a surjection
    \[
        \bigoplus_{1 \leqslant p \leqslant 2^l} \h^n(G; L_p) \cong \h^n \left(G; \bigoplus_{1 \leqslant p \leqslant 2^l} L_p \right) \rightarrow \h^n(G; \operatorname{coInd}_{N_i}^G M).
    \]
    But \cref{claim:pass_to_invMN} and a final application of \cite[Proposition VIII.6.8]{BrownGroupCohomology} yield
    \[
        \h^n(G; L_p) \cong L_p \otimes_{\mathcal R_i} \h^n(G; \mathcal R_i) = 0,
    \]
    concluding the proof. \qedhere
\end{proof}

\begin{rem}
    The finiteness property $\FP(k)$ was used to apply \cite[Proposition VIII]{BrownGroupCohomology} at various points throughout the proof above. It turns out that the full strength of \cite{BrownGroupCohomology} is not necessary, and one can weaken the finiteness condition $\FP(k)$. Indeed, \cref{thm:main} holds under the assumption that the trivial $k[G]$-module $k$ admits a projective resolution of length $n$ that is finitely generated in degree $n$. This is explained in \cite[Lemma 6.1]{FisherKlinge_RPVN}.
\end{rem}

\begin{proof}[Proof of \cref{cor:free_by_poly_Z}]
    If $G$ is free-by-\{poly-$\Z$\}, then $\b{2}(G) = 0$ by Gaboriau's theorem \cite[Th\'eor\`eme 6.6]{Gaboriau2002}. If $\b{2}(G) = 0$, then $G$ is of type $\FP(\Q)$ by \cite[Theorem 3.10]{JaikinLinton_coherence}, and therefore $G$ admits a poly-$\Z$ quotient with kernel of rational cohomological dimension at most one by \cref{thm:main}. The kernel is free by the Stallings--Swan theorem \cite{Stallings_cd1,Swan_cd1}. \qedhere
\end{proof}

We mention one application to coherence. A group is \emph{coherent} if all its finitely generated subgroups are finitely presented, and a ring is \emph{left coherent} if all its finitely generated left ideals are finitely presented. Note that group algebras are left coherent if and only if they are right coherent, where right coherence is defined analogously.

\begin{cor}\label{cor:coherence}
    Let $G$ be a residually poly-$\Z$ group with $\cd_\Q(G) \leqslant 2$. If $\b{2}(G) = 0$, then $G$ is coherent and $k[G]$ is left coherent for any field $k$.
\end{cor}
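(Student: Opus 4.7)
The plan is to reduce, via \cref{cor:free_by_poly_Z}, to proving coherence for free-by-\{poly-$\Z$\} groups, and then to argue by induction on the Hirsch length of the poly-$\Z$ quotient. The hypotheses of \cref{cor:free_by_poly_Z} are met, so one obtains a short exact sequence $1 \to F \to G \to P \to 1$ with $F$ free and $P$ poly-$\Z$. Writing $P$ as a tower of $\Z$-extensions and lifting to $G$ yields a subnormal series $F = G_0 \trianglelefteqslant G_1 \trianglelefteqslant \cdots \trianglelefteqslant G_m = G$ in which each consecutive quotient is $\Z$, and since $\Z$ is free each extension splits, giving $G_{j+1} \cong G_j \rtimes \Z$. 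Each $G_j$ is still residually poly-$\Z$.

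First I would handle the base case $G_0 = F$: subgroups of $F$ are free, hence finitely presented if finitely generated, so $F$ is coherent; and $k[F]$ is a free ideal ring, which is in particular left coherent. For the inductive step I would show that $G_{j+1} = G_j \rtimes \Z$ is coherent with left coherent group algebra, assuming the same of $G_j$. Group coherence of $G_{j+1}$ should follow from an extension of the Feighn--Handel style mapping torus argument (the mapping torus of an automorphism of a coherent group of the form ``free-by-\{poly-$\Z$\}'' is coherent). For ring coherence, the natural tool is the Jaikin-Zapirain--Linton coherence machinery \cite{JaikinLinton_coherence}: the key inputs are that $G_j$ is residually poly-$\Z$, so $\Dk{G_j}$ exists by Jaikin-Zapirain's theorem, and that $\Dk{G_j}$ is flat over $k[G_j]$, which together enable a transfer of coherence across the cyclic extension $G_j \hookrightarrow G_j \rtimes \Z$.

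The hard part will be the inductive step for ring coherence: ring coherence does not in general transfer across cyclic extensions, and the transfer here rests on the flatness of $\Dk{G_j}$ over $k[G_j]$, which is essentially the strong Atiyah conjecture for $G_j$ over $k$. This is available in the residually poly-$\Z$ setting, but executing the transfer cleanly is the technical heart of the argument. Everything else reduces to bookkeeping with the Hughes-free / Malcev--Neumann technology developed in the earlier sections.
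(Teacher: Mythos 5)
Your route has genuine gaps, both structural and in the key inductive steps. Structurally: coherence of $G$ and of $k[G]$ is a statement about \emph{all} finitely generated subgroups (respectively, finitely generated ideals), and $G$ is not assumed finitely generated in this corollary, so \cref{cor:free_by_poly_Z} cannot be applied to $G$ itself as you do. The paper's proof instead first observes that $\b{2}(H) = 0$ for \emph{every} subgroup $H \leqslant G$ (this is \cite[Proposition 3.7]{JaikinLinton_coherence}, and it is not automatic -- vanishing of $L^2$-Betti numbers is not inherited by subgroups in general; here it uses $\cd_\Q \leqslant 2$), so that \cref{cor:free_by_poly_Z} applies to each finitely generated subgroup, and then invokes the coherence criteria of \cite{JaikinLinton_coherence} as organised in \cite[Corollary 6.9]{FisherKlinge_RPVN}. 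You skip this reduction entirely.

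More seriously, the inductive steps you rely on are not available. For group coherence, ``the mapping torus of an automorphism of a coherent group is coherent'' is not a theorem: Feighn--Handel requires a \emph{finitely generated} free fibre, whereas your kernel $F$, and the intermediate groups $G_j$, may be infinitely generated free-by-\{poly-$\Z$\}, and no general ``coherent-by-$\Z$ is coherent'' result exists -- proving it for this class is essentially the content of the Jaikin-Zapirain--Linton machinery you would be trying to reprove. For ring coherence the situation is worse: coherence of rings does not pass to (skew) polynomial or Laurent extensions in general (Soublin's example already kills the commutative polynomial case), and the ``transfer via flatness of $\Dk{G_j}$'' you defer as the technical heart is precisely the unproved content; in \cite{JaikinLinton_coherence} the coherence of the group and of its group algebra is deduced from a criterion in terms of vanishing of $\b{2}$ for all finitely generated subgroups (plus $\cd_\Q \leqslant 2$ and local indicability), not by induction along a cyclic tower. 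The fix is to abandon the Hirsch-length induction: pass to a finitely generated subgroup $H$, note $\cd_\Q(H) \leqslant 2$ and $\b{2}(H) = 0$, conclude $H$ is free-by-\{poly-$\Z$\} via \cref{cor:free_by_poly_Z}, and then cite the coherence results of \cite{JaikinLinton_coherence} directly, as in \cite[Corollary 6.9]{FisherKlinge_RPVN}.
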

\begin{proof}
    The assumptions imply that $\b{2}(H) = 0$ for all subgroups of $G$ \cite[Proposition 3.7]{JaikinLinton_coherence}, and therefore every finitely generated subgroup of $G$ is free-by-\{poly-$\Z$\} by \cref{cor:free_by_poly_Z}. The conclusions of the corollary now essentially follow from the coherence results of \cite{JaikinLinton_coherence}; this is carefully explained in the proof of \cite[Corollary 6.9]{FisherKlinge_RPVN}. \qedhere
\end{proof}

\bibliography{bib}
\bibliographystyle{alpha}

\end{document}